\newtheorem{theorem}{Theorem}[section]
\newtheorem{proposition}[theorem]{Proposition}
\newtheorem{corollary}[theorem]{Corollary}
\newtheorem{rem}{Remark}
\newcommand{\henrik}[1]{\ifthenelse{\boolean{showcomments}}
{\textcolor{Blue}{(Henrik says: #1)}}{}}
\newcommand{\emma}[1]{\ifthenelse{\boolean{showcomments}}
{\textcolor{Green}{(Emma says: #1)}}{}}
\newcommand{\newtext}[1]{\ifthenelse{\boolean{shownew}}
{\textcolor{Red}{#1}}{}}
\newcommand{\hn}{$\mathcal{H}_2$ }
\newcolumntype{L}[1]{>{\raggedright\let\newline\\\arraybackslash\hspace{0pt}}m{#1}}
\newcolumntype{C}[1]{>{\centering\let\newline\\\arraybackslash\hspace{0pt}}m{#1}}
\newcolumntype{R}[1]{>{\raggedleft\let\newline\\\arraybackslash\hspace{0pt}}m{#1}}
\title{\LARGE \bf
Performance metrics for droop-controlled microgrids with variable voltage dynamics}
\author{Emma Tegling, Dennice F. Gayme and Henrik Sandberg %
\thanks{E. Tegling and H. Sandberg are with the School of Electrical Engineering and the ACCESS Linnaeus Center, KTH Royal Institute of Technology, SE-100 44 Stockholm, Sweden {(\tt tegling, hsan@kth.se)}. D. F. Gayme is with the Department of Mechanical Engineering at the Johns Hopkins University, Baltimore, MD, USA, 21218 {(\tt dennice@jhu.edu)}. Funding support from the NSF (National Science Foundation) through grant number ECCS 1230788 (D.F.G.) and the Swedish Research Council through grant 2013-5523 as well as the Swedish Foundation for Strategic Research through the project ICT-Psi (H.S.) is gratefully acknowledged. }  
 }%
\begin{document}
\maketitle
\thispagestyle{empty}
\pagestyle{empty}

\begin{abstract}
This paper investigates the performance of a microgrid with droop-controlled inverters in terms of the total power losses incurred in maintaining synchrony under persistent small disturbances. The inverters are modeled with variable frequencies and voltages under droop control. For small fluctuations from a steady state, these transient power losses can be quantified by an input-output \hn norm of a linear system subject to distributed disturbances. We evaluate this \hn norm under the assumption of a dominantly inductive network with identical inverters. The results indicate that while phase synchronization, in accordance with previous findings, produces losses that scale with a network's size but only weakly depend on its connectivity, the losses associated with the voltage control will be larger in a highly connected network than in a loosely connected one. The typically higher rate of convergence in a highly interconnected network thus comes at a cost of higher losses associated with the power flows used to reach the steady state.    

\end{abstract}

\section{Introduction}
\label{sec:intro}
Electric power generation is becoming increasingly distributed as the penetration of renewable energy sources increases \cite{smart_report2012}. Deferred investments in grid infrastructure along with high fossil fuel prices also make utilities incapable of meeting increased local demand centrally \cite{Farhangi2010}. Distributed generation (DG) units are typically connected to low to medium voltage grids via DC/AC or AC/AC power converters (inverters). Replacing the traditional centralized synchronous generator based power plants with these resources is leading to a much more heterogeneous power system. The \emph{microgrid} has been proposed as a key strategy to address issues related to this heterogeneity, as it allows for local operation of networks composed of loads and DG units, independently from the main grid \cite{Markvart2006}, \cite{Lasseter2002}. 

A key concern in the operation of micgrogrids is the control of the DG unit power inverters to ensure stability, power balance and synchronization \cite{Lopes2006}. A widely proposed control scheme in this context is \textit{droop control}, which is a decentralized proportional controller. A recent research trend is to characterize conditions for synchronous stability in microgrids under droop control. In particular, a series of work \cite{SimpsonPorco2012, SimpsonPorco2013} derives analytical conditions for synchronization and power sharing in droop-controlled inverter networks by drawing connections to models of coupled Kuramoto oscillators. Analogies between power systems and Kuramoto oscillators were previously used by D\"orfler and Bullo in \cite{Dorfler2010, Dorfler2011} to derive conditions for stability in synchronous generator networks.  While most of these works have assumed constant voltage profiles, there is a recent focus on analyzing droop control also for voltage and reactive power stabilization in microgrids \cite{Gentile2014, LuChu2013, Schiffer2014, Schiffer2014b}. In particular, Schiffer et al. \cite{Schiffer2014} derive conditions on gains for frequency and voltage stability by formulating a port-Hamiltonian description of the system with variable voltage dynamics.  

In the present work, we study the same type of inverter-based microgrids with variable frequencies and voltages under droop control as in \cite{Schiffer2014}. However, the question of concern here is not one of stability but rather of performance. We consider performance in terms of the total transient power losses incurred in maintaining synchrony under persistent small disturbances. These losses are associated with power flows that occur spontaneously in the system when an inverter is deviating from its nominal phase and voltage, and can be regarded as a measure of control effort. These transient power losses can be quantified through an input-output \hn norm of a linear time-invariant (LTI) system of coupled inverter dynamics, subject to distributed disturbances.

Conceptually, this performance measure relates to measures of disorder or robustness in general networks with consensus-type dynamics. Several such performance measures and their asymptotic scalings in large-scale networks were evaluated in \cite{Bamieh2012}. In the context of oscillator networks, robustness with respect to stochastic disturbances was studied in \cite{SiamiMotee2013} and the use of control nodes to improve performance in terms of inter-nodal interactions was recently evaluated in \cite{Grunberg2014}. 

The present study extends the work in \cite{BamiehGayme2012, Tegling2014}, in which performance in terms of power losses was evaluated for networks of synchronous generators, to include the effect of variable voltages. A similar approach was taken in \cite{Sjodin2014} to characterize these losses in heterogeneous networks with inverters. Such networks typically require tighter voltage control than e.g. transmission grids, and here we quantify the additional transient losses arising through fluctuating voltages. 

In the present work, we analyze the performance of the droop-controlled inverter network in two steps. First, we consider the network under the assumption of decoupled active and reactive power flows, resulting in decoupled frequency and voltage dynamics. We then re-introduce the cross-couplings and show that their effect is small compared to the overall transient losses. Our main result shows that under the assumption of uniform inverter parameters and resistance-to-reactance ratios in the network, the power losses can be decomposed into two parts; one associated with frequency control which is identical to previous results for synchronous generator networks, and an additional part associated with voltage control. The result reduces to the first part if voltages are held constant, in which case losses grow unboundedly with the network size, but are independent of network topology. The losses associated with variable voltages, however, are shown not only to grow with network size, but also to increase with increasing network connectivity. This means that although a more strongly connected network may have a higher rate of convergence \cite{Tang2011} and be easier to synchronize \cite{Dorfler2010}, this benefit comes at the cost of higher transient power losses.

The remainder of this paper is organized as follows. In Section \ref{sec:problem_setup} we introduce the model for the inverters and power flows. Section~\ref{sec:perfmeas} introduces the performance measure, which is evaluated in Section~\ref{sec:lossless} under the assumption of decoupled, lossless, network dynamics. In Section~\ref{sec:lossygrids} we discuss the effects of nonzero cross-couplings on the system performance before we conclude in Section~\ref{sec:conclusions}. 


\section{Problem setup}
\label{sec:problem_setup}
Consider a network $\mathcal{G} = \{\mathcal{N}, \mathcal{E}\}$ with the set of nodes $\mathcal{N} = \{1, \ldots, N\}$ and a set of edges, or network lines, $\mathcal{E} = \{\mathcal{E}_{ik}\}$. Each of these lines is represented by a constant admittance $y_{ik} = g_{ik} - \mathbf{j}b_{ik}$. Throughout this paper, we will assume a Kron-reduced network model (see e.g. \cite{Varaiya1985}), where loads are modeled as constant impedances that are absorbed into the network lines. Consequently, every node $i \in \mathcal{N}$ represents a generation unit with a power inverter as its grid interface. Each node has an associated phase angle $\delta_i$ and voltage magnitude $V_i$.
\begin{rem}
The current modeling framework can also allow for loads modeled as frequency-dependent active power withdrawals from the network. This approach would result in a network topology preserving system model as first proposed by Bergen and Hill \cite{BergenHill1981}. Previous results reported in \cite{Sjodin2014} suggest that such load models would not in principle alter the scaling properties of the losses studied here. 
\end{rem} 
 
\subsection{Inverter and droop control model}
We now introduce the models of the power inverters adopting the framwork presented in \cite{Schiffer2014}. We assume that these inverters are voltage sources, whose amplitude and frequency output can be regulated according to:
\begin{equation}
\label{eq:control1}
\begin{aligned}
\dot{\delta}_i &= u_i^\delta \\
\tau_{V_i} \dot{V}_i &= -V_i + u_i^V, 
\end{aligned}
\end{equation}
where $u_i^V$ and $u_i^\delta$ are the respective control signals. Here we have assumed that the voltage regulation is subject to a lag represented by a filter with time constant $\tau_{V_i} \ge 0$. 

The controls $u_i^\delta$ and $u_i^V$ are then implemented as simple proportional controllers (``droop controllers'') based on active and reactive power deviations respectively, which are given by:
\begin{equation}
\label{eq:droops}
\begin{aligned}
u_i^\delta &= \omega^* - k_{P_i}(\hat{P}_i - P_i^*)\\  
u_i^V &= V_i^* - k_{Q_i}(\hat{Q}_i - Q_i^*),
\end{aligned}
\end{equation}
where $\omega^*$, $V^*_i$, $P^*_i$ and $Q_i^*$ are the respective setpoints for the frequency, voltage magnitude, active and reactive power. The parameters $k_{P_i},~ k_{Q_i} >0$ are the respective droop coefficients. $\hat{P}_i$ and $\hat{Q}_i$ are the active and reactive powers measured by the power electronics at the inverter. These measurements are assumed to be processed through low-pass filters given by:
\begin{equation}
\label{eq:PQfilters}
\begin{aligned}
\tau_{P_i} \dot{\hat{P}}_i & = -\hat{P}_i + P_i \\
\tau_{Q_i} \dot{\hat{Q}}_i & = -\hat{Q}_i + Q_i,
\end{aligned}
\end{equation}
where $\tau_{P_i},~\tau_{Q_i} > 0$ are the filter time constants and $P_i$ and $Q_i$ are the actual power injections to the network at node $i$.
 
We can now use  \eqref{eq:control1} -- \eqref{eq:PQfilters} to formulate a closed-loop system. For this purpose, we first assume that the time constant for the voltage control, $\tau_{V_i}$ is small compared to $\tau_{Q_i}$, and can be neglected \cite{Schiffer2014}. We therefore set $\tau_{V_i} = 0$ in \eqref{eq:control1}, and then by substituting \eqref{eq:droops} into \eqref{eq:control1}, we obtain:
\begin{align}
 \dot{\delta_i} &= \omega_i \\ \label{eq:omegaalg}
 \omega_i &= \omega^* - k_{P_i}(\hat{P}_i - P_i^*)\\ \label{eq:Valg}
  V_i &= V_i^* - k_{Q_i}(\hat{Q}_i - Q_i^*),
  \end{align}  where we have introduced the inverter frequency $\omega_i$. Taking the derivatives of \eqref{eq:omegaalg} and \eqref{eq:Valg} with respect to time gives $\dot{\omega}_i = -k_{P_i} \dot{\hat{P}}_i$ and $\dot{V}_i = -k_{Q_i} \dot{\hat{Q}}_i$, in which we can insert equations \eqref{eq:PQfilters}. We then substitute $\hat{P}_i$ and $\hat{Q}_i$ using \eqref{eq:omegaalg} and \eqref{eq:Valg} and obtain the control dynamics for the phase angle and voltage as:
 \begin{equation}
 \label{eq:inverterdynamics}
 \begin{aligned}
  \dot{\delta_i} &= \omega_i\\ 
  \tau_{P_i}\dot{\omega}_i &= -\omega_i + \omega^* - k_{P_i}(P_i - P_i^*) \\
  \tau_{Q_i} \dot{V}_i & = -V_i + V_i^* -  k_{Q_i}(Q_i - Q_i^*).
 \end{aligned}
 \end{equation}
In the next section, we present the equations for $P_i$ and $Q_i$.

\subsection{Power flows}
\label{sec:powerflows}
Introducing $\delta_{ik} = (\delta_i - \delta_k)$ as the phase angle difference between neighboring nodes, the active and reactive powers injected to the grid at node $i \in \mathcal{N}$ are given by
\begin{align}
\label{eq:Pinj}
P_{i} &=  -g_{ii} V_i^2 + \sum_{k\sim i}V_iV_k(g_{ik} \cos\delta_{ik} + b_{ik}\sin\delta_{ik})\\ \label{eq:Qinj}
Q_{i} &= b_{ii} V_i^2 + \sum_{k \sim i}V_iV_k(g_{ik} \sin\delta_{ik} - b_{ik} \cos\delta_{ik}).
\end{align}
Here, $k \sim i $ indicates the existence of a line $\mathcal{E}_{ik}$ with associated conductance $g_{ik}$ and suceptance $b_{ik}$. At node $i$, $g_{ii} = \bar{g}_i + \sum_{k\sim i }g_{ik}$ and $b_{ii} = \bar{b}_i +\sum_{k \sim i} b_{ik}$ represent the respective shunt conductance and shunt susceptance. We will in the following make the common assumption \cite{Glover2011, Schiffer2014b} that the shunt elements are purely inductive, so that in our notation $\bar{g}_i=0$ and $\bar{b}\ge 0$ for all $i \in \mathcal{N}$. 

As per convention in power flow analysis, we assume that all quantities in equations \eqref{eq:Pinj} -- \eqref{eq:Qinj} have been normalized by system constants and are measured in per unit (p.u.). Throughout the paper, we will be considering the system under the assumption of small deviations from an operating point. For these reasons, we can approximate the power flows by a linearization around the point $ \textstyle P_{i}^0(\delta_{ik}^0, V_i^0, V_k^0)$ and $\textstyle Q_{i}^0(\delta_{ik}^0, V_i^0, V_k^0)$,  where $\textstyle V_i^0 = \textstyle V_k^0 =\textstyle V^0 = 1$ and $\delta_{ik} = 0$ for all $\textstyle i,k \in \mathcal{N}$.  This procedure gives the linearized power injections at node $i$ as:
\begin{align} \label{eq:Plin} 
\Delta P_i =& \sum_{k \sim i} \left( - g_{ik}(\Delta V_i - \Delta V_k) + b_{ik} \Delta \delta_{ik} \right)\\ \label{eq:qlin}
\Delta Q_i =& 2\bar{b}_i\Delta V_i +\sum_{k \sim i} \left( b_{ik}(\Delta V_i - \Delta V_k) + g_{ik} \Delta \delta_{ik}\right).
\end{align}
To simplify the remaining notation, we introduce the network admittance matrix $Y\in \mathbb{C}^{N \times N}$, given by $Y_{ii} = y_{ii}$ if $k = i$, $Y_{ik} = -y_{ik}$ if $k\sim i, ~k \neq i$ and zero otherwise. 
The matrix $Y$ can be partitioned into a real and an imaginary part:
\begin{equation} \label{eq:defY}
Y = L_G - \mathbf{j}(L_B+ \mathrm{diag}\{\bar{b}_i\}) , 
\end{equation} 
where $L_G$ denotes the network's conductance matrix and $L_B$ its susceptance matrix. By definition, the matrices $L_B$ and $L_G$ are weighted graph Laplacians of the network graphs defined respectively by the suseptances $b_{ij}$ and conductances  $g_{ij}$ of the network lines.

\subsection{LTI system formulation }
\label{sec:LTIdef}
We now formulate the inverter dynamics as a linear time-invariant (LTI) system subject to distributed disturbances acting on the inverters, representing fluctuations in generation and loads. For this purpose, we let the operating point around which the power flow equations are linearized be given by the setpoints introduced in \eqref{eq:droops}, such that $\Delta P_i = P_i - P_i^*$ and $\Delta Q_i = Q_i- Q_i^*$ for all $i \in \mathcal{N}$. Without loss of generality we then assume that the states at this point are transfered to the origin. In an effort to avoid cumbersome notation, we then omit the difference operator $\Delta$ and let the state variables $(\delta_{ik},~\omega_i,~ V_i )$ represent deviations from the operating point and assume additive process noise through the disturbance input $\mathrm{w}$.

We can then use the power flow equations \eqref{eq:Plin} -- \eqref{eq:qlin} to express the dynamics \eqref{eq:inverterdynamics} of the $i^{\mbox{th}}$ inverter as: 
\begin{small}
\begin{align}
\nonumber
\dot{\delta}_i &= \omega_i \\ \nonumber
\tau_{P_i}\dot{\omega}_i &= -\omega_i - k_{P_i}( - \sum_{k \sim i} g_{ik}( V_i -  V_k) + \sum_{k \sim i } b_{ik}  \delta_{ik}) + \mathrm{w}_i^\omega \\ \nonumber
  \tau_{Q_i} \dot{V}_i & = -V_i -  k_{Q_i}( 2\bar{b}_iV_i+ \sum_{k \sim i} b_{ik}( V_i -  V_k) + \sum_{k \sim i } g_{ik}  \delta_{ik} ) +\mathrm{w}_i^V.
\end{align} 
\end{small}
Now, by defining $\delta,~\omega, ~V$ as column vectors containing the states $\delta_i,~\omega_i, ~V_i$, $i \in \mathcal{N}$ and using the susceptance and conductance matrices defined in \eqref{eq:defY}, we can re-write the above in state space form as follows: 
\begin{small} 
\begin{align}  \nonumber
\label{eq:sseqns} 
\begin{bmatrix}
\dot{\delta} \\ \dot{\omega} \\ \dot{V}
\end{bmatrix}  =& \begin{bmatrix}
0 & I & 0 \\ -K_PT_P^{-1} L_B & -T_P^{-1} & K_PT_P^{-1} L_G \\-K_QT_Q^{-1} L_G & 0 & -C_Q T_Q^{-1} - K_QT_Q^{-1} L_B
\end{bmatrix} \begin{bmatrix}
\delta \\ \omega \\ V
\end{bmatrix}   \\
&+ \begin{bmatrix}
0 & 0 \\ T_P^{-1} & 0 \\ 0 & T_Q^{-1}
\end{bmatrix} \mathrm{w},
\end{align} \end{small} 
where $\mathrm{w} = [\mathrm{w}_i^\omega, ~\mathrm{w}_i^V]^T$ represents the disturbance input. We have also introduced $C_Q =\mathrm{diag}\{c_{Q_i} \} $ with $c_{Q_i} = 1 + 2k_{Q_i}\bar{b}_i $. The remaining system parameters are given by $K_{P/Q} = \mathrm{diag}\{k_{P/Q_i}\}$,  $T_{P/Q} = \mathrm{diag}\{\tau_{P/Q_i}\}$.

\section{System performance}
\label{sec:perfmeas}
In this paper, we are concerned with the performance of the system \eqref{eq:sseqns} in terms of the real power losses incurred in returning the system to a synchronous state following a disturbance, or in maintaining this state under persistent stochastic disturbances $\mathrm{w}$. These losses are associated with the power flows that arise when the network compensates for a node's deviating voltage or phase angle, and can be regarded as the control effort required to drive the system to a steady state. 

By defining an output of the system \eqref{eq:sseqns} as a measurement of the power losses associated with the system trajectories, we can evaluate the total transient losses using an input-ouptut \hn norm. Here we adopt the approach first introduced in \cite{BamiehGayme2012}, but extend the performance measure to also reflect non-uniform voltage profiles across the network. 

First, consider a general MIMO input-output system $G$ in state space form
\begin{equation}
\begin{aligned}
\label{eq:genericiosystem}
\dot{x} = & A x + b\mathrm{w} \\ y =& Cx.
\end{aligned}
\end{equation}
Provided that $G$ is stable, its squared \hn norm can be interpreted as the total steady-state variance of the output, when the input $\mathrm{w}$ is white noise with unit covariance, i.e., 
\[ ||G||_2^2 = \lim_{t \rightarrow \infty} \mathbb{E}\{y^*(t)y(t)\}. \]
In our case, by defining an output  $y(t)$ of \eqref{eq:sseqns} such that the total power losses satisfy $\mathbf{P}_{\mathrm{loss}}(t) = y^*(t)y(t)$, we will obtain the total expected power losses under a white noise disturbance input as the \hn norm from $\mathrm{w}$ to $y$ . 
\begin{rem}
Further standard interpretations of the \hn norm motivate the use of this norm to quantify the total power losses also under other input scenarios, see e.g. \cite{Tegling2014}.\end{rem} 

To define the relevant output measure, consider the real power loss over an edge $\mathcal{E}_{ik}$, given by Ohm's law as
\begin{equation} 
\label{eq:ohmflow}
P_{ik}^{\mathrm{loss}} = g_{ik}|v_i - v_k|^2, 
\end{equation}
where $v_i, ~ v_k$ are the complex voltages at nodes $i$ and $k$ (i.e. $\textstyle v_i = \textstyle V_ie^{\mathbf{j}\delta_i}$). We can now enforce the common linearized system assumption of small phase angle differences. Standard trigonometric methods then give that $\textstyle |v_i - v_k|^2 \approx \textstyle (V_i - V_k)^2 + (V_i(\delta_i - \delta_k))^2$. Since we also assume $\textstyle V_i \approx 1$ p.u. around the linearization point for all $\textstyle i \in \mathcal{N}$, an approximation of the power loss over $\mathcal{E}_{ik}$ is $ \textstyle  P_{ik}^{\mathrm{loss}} =  \textstyle g_{ik}\left[(V_i - V_k)^2 + (\delta_i - \delta_k) \right]^2$. The total instantaneous power losses over the network are then approximately
\begin{equation}
\label{eq:quadPloss}
\mathbf{P}_{\mathrm{loss}} = \sum_{i \sim k} g_{ik} \left[ \left( V_i  - V_k \right)^2 + \left( \delta_i -\delta_k \right)^2 \right].
\end{equation}
Making use of the conductance matrix $L_G$ defined in \eqref{eq:defY}, we can write \eqref{eq:quadPloss} as the quadratic form 
\begin{equation} \label{eq:quadformloss}
\mathbf{P}_{\mathrm{loss}} = V^TL_GV + \delta^T L_G \delta,
\end{equation}
where $V$ and $\delta$ are the state vectors defined in Section~\ref{sec:LTIdef}. 
Noting that $L_G$ is a positive semidefinite graph Laplacian and therefore has a unique positive semidefinite square-root $L_G^{1/2}$, we can define $y$ as
\begin{equation}
\label{eq:plossoutputdef}
y = \begin{bmatrix}
L_G^{1/2} & 0 & 0 \\ 0 & 0 & L_G^{1/2}
\end{bmatrix} \begin{bmatrix}
\delta \\ \omega \\ V
\end{bmatrix},
\end{equation}
which gives precisely that $\mathbf{P}^{\mathrm{loss}}= y^*y$. 

The performance measure \eqref{eq:quadformloss} represents the sum of the squared weighted differences in states between neighboring nodes, i.e., of \emph{local} state deviations.  It can therefore be regarded as a local, or microscopic, error measure which stands in contrast to other measures of disorder on a macroscopic level. Macroscopic measures could for example be each node's deviation from a network average or from a selected reference node. These types of performance measures and their asymptotic scalings in large networks were evaluated in \cite{Bamieh2012} for vehicular formations in regular network structures. Such vehicular formation problems, formulated as second order consensus dynamics, are very similar to the synchronization problem considered here. As we will also show, the scaling of the microscopic error measure considered there scaled with network size in the same manner as the total power losses obtained through \eqref{eq:quadformloss}. Meaningful measures of macroscopic disorder or ``coherence'' can also be defined in the context of synchronization in power networks, some of which are studied in a recent preprint \cite{Grunberg2015}.




\section{Analysis of decoupled microgrid dynamics}
\label{sec:lossless}
In this section, we analyze the dominant performance of \eqref{eq:sseqns} with respect to the output \eqref{eq:plossoutputdef}, by assuming that the network's resistances are small compared to its reactances. Under this common assumption, the active power flow is a function only of the phase angles and the reactive power flow is a function only of the voltage magnitudes, i.e., $P(\delta,V) \approx P(\delta)$, $Q(\delta,V) \approx Q(V)$, see e.g. \cite{Gentile2014, Schiffer2014, SimpsonPorco2015}. This leads to a decoupling of the frequency and voltage dynamics and we obtain $L_G = 0$ in the system matrix of \eqref{eq:sseqns}. The output \eqref{eq:plossoutputdef} then measures the power losses associated with the trajectories arising from these decoupled dynamics by retaining the non-zero resistances through $L_G$.


In Section \ref{sec:lossygrids}, we relax the assumption of decoupled power flows, and show that the results derived here are robust towards that relaxation, provided resistances remain sufficiently small. In particular, the errors made by evaluating the performance under lossless dynamics will be small in relation to the overall performance of the network.

\begin{rem}
The assumption of negligible or small resistances compared to reactances is not, in general, applicable to low to medium voltage grids \cite{Purchala2005}. However, it is not unreasonable for an inverter-based network, given that inverter output impedances are typically highly inductive \cite{Schiffer2014b}. When these are absorbed into the network through the Kron reduction they may therefore dominate the line resistances.  
\end{rem}



In the subsequent derivations we make the following further assumptions:
\begin{enumerate}[i)]
\item \textit{Identical inverters.} All inverters have identical droop control settings and low-pass filters for power measurement, i.e., $K_P = \mathrm{diag}\{k_P\}$, $K_Q = \mathrm{diag}\{k_Q\}$, $T_P = \mathrm{diag}\{\tau_P\}$, $T_Q = \mathrm{diag}\{\tau_Q\}$.
\item \emph{Uniform shunt conductances.} All nodes have identical shunt conductances, i.e., $\bar{b}_i = \bar{b} \ge 0$ and $C_Q = \mathrm{diag}\{c_Q\}$.
\item \emph{Uniform resistance-to-reactance ratios.} The ratio of resistances to reactances, equivalently conductances to susceptances, of all lines are uniform and constant, i.e., \[\alpha : = \frac{g_{ik}}{b_{ik}}, \] for all $\mathcal{E}_{ik} \in \mathcal{E}$. This implies $L_G= \alpha L_B$.
\end{enumerate}
Assumption (iii), which is also applied in e.g. \cite{LuChu2013}, \cite{Dorfler2014}, can be motivated first by uniformity in the physical line properties in a microgrid (i.e., materials and dimensions). Second, Kron reduction of a network increases its uniformity in node degrees \cite{Motter2013}. This also makes the line properties more uniform than in actual power networks. 

For ease of reference, we now re-state the system \eqref{eq:sseqns} with the output \eqref{eq:plossoutputdef} under assumptions (i) - (iii) as the multiple-input multiple-output (MIMO) LTI system $H$:
\begin{subequations}
\label{eq:iosystem1}
\begin{align}
\begin{bmatrix} \nonumber
\dot{\delta} \\ \dot{\omega} \\ \dot{V}
\end{bmatrix}  =& \begin{bmatrix}
0 & I & 0 \\ -\frac{k_P}{\tau_P} L_B & -\frac{1}{\tau_P} I & 0\\0& 0 & -\frac{c_Q}{\tau_Q} I -\frac{k_Q}{\tau_Q} L_B
\end{bmatrix} \begin{bmatrix}
\delta \\ \omega \\ V
\end{bmatrix}   \\ \label{eq:losslessstates}
&+ \begin{bmatrix}
0 & 0 \\ \frac{1}{\tau_P}I & 0 \\ 0 & \frac{1}{\tau_Q}I
\end{bmatrix} \mathrm{w}  =: A\psi + B \mathrm{w},\\  \label{eq:losslessoutput}
y =& \begin{bmatrix}
\sqrt{\alpha}L_B^{1/2} & 0 & 0 \\ 0 & 0 & \sqrt{\alpha}L_B^{1/2}
\end{bmatrix} \begin{bmatrix}
\delta \\ \omega \\ V
\end{bmatrix} =: C \psi.
\end{align}
\end{subequations}

The remainder of this section is organized as follows. We will first show that the system \eqref{eq:iosystem1} is input-output stable in order to ensure that its \hn norm is bounded. We then proceed to derive an expression for this norm and to state its value for specific network topologies. Finally, we discuss these results in relation to previous ones on coupled oscillator networks. 

\subsection{Eigenvalues and stability}
By definition, $L_B$ and $L_G$ are weighted graph Laplacians and as such they satisfy the equation 
\(L_B\mathbf{1} = L_G\mathbf{1} = \mathbf{0},\)
where $\mathbf{1}$ is the vector of all ones. It is easy to show that this zero eigenvalue is also an eigenvalue of the system \eqref{eq:iosystem1}, corresponding to the drift of the mean phase angle.  This mode is however unobservable from the output, as we will show by a simple state transformation in the following section. Now, if we denote by $\lambda_n^B$ the $n^{\mbox{th}}$ eigenvalue of $L_B$ and WLOG assume $\lambda_1^B = 0$. The remaining eigenvalues of \eqref{eq:iosystem1} are then given by Theorem~\ref{thm:eigenvalues}. 
\begin{theorem}
\label{thm:eigenvalues}
If the graph underlying the network $\mathcal{G}$ is connected, then the eigenvalues of the system \eqref{eq:losslessstates} are:
\begin{equation*}
\begin{split}
\Lambda(A) = \left\lbrace 0, -\frac{1}{2\tau_P}\right. & (1 \pm \sqrt{1-k_P\tau_P\lambda_n^B}),   \\
 & \left.  -\frac{1}{\tau_P}, -\frac{c_Q}{\tau_Q}, -\frac{c_Q}{\tau_Q} - \frac{k_Q}{\tau_Q}\lambda_n^B \right\rbrace , \end{split}
\end{equation*}
for $n = \{2,\ldots, N\}$. If the parameters $k_P,\tau_P,k_Q,\tau_Q >0$ and the shunt susceptance satisfies $ \textstyle \bar{b} > \textstyle\frac{-1}{2k_Q} \Leftrightarrow \textstyle c_Q>0$, all eigenvalues apart from $\lambda_1 = 0$ lie strictly in the left half of the complex plane and $A$ is a stable matrix. 
\end{theorem}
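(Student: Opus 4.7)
The plan is to exploit the block structure of the state matrix $A$ in \eqref{eq:losslessstates}. Observe that both off-diagonal blocks coupling $V$ with $(\delta,\omega)$ vanish under the decoupling assumption $L_G=0$, so $A$ is block diagonal with a $2N\times 2N$ phase/frequency block
\[ A_1 = \begin{bmatrix} 0 & I \\ -\tfrac{k_P}{\tau_P} L_B & -\tfrac{1}{\tau_P} I \end{bmatrix} \]
and an $N\times N$ voltage block $A_2 = -\tfrac{c_Q}{\tau_Q}I - \tfrac{k_Q}{\tau_Q}L_B$. Since $L_B$ is a symmetric positive semidefinite graph Laplacian, diagonalize it as $L_B = U\Lambda U^T$ with $\Lambda = \mathrm{diag}(\lambda_n^B)$ and $\lambda_1^B = 0$. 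I would apply this spectral decomposition to both blocks simultaneously.

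For $A_2$, the eigenvalues are read off directly from $-\tfrac{c_Q}{\tau_Q} - \tfrac{k_Q}{\tau_Q}\lambda_n^B$ for $n = 1,\ldots,N$, giving $-c_Q/\tau_Q$ when $n=1$ and the $N-1$ remaining values in the theorem statement for $n\geq 2$. For $A_1$, the similarity transform $\mathrm{blkdiag}(U,U)$ converts it into a matrix whose rows/columns can be permuted into $N$ decoupled $2\times 2$ blocks
\[ M_n = \begin{bmatrix} 0 & 1 \\ -\tfrac{k_P}{\tau_P}\lambda_n^B & -\tfrac{1}{\tau_P} \end{bmatrix}, \qquad n=1,\ldots,N. \]
The eigenvalues of $M_n$ come from the quadratic $\mu^2 + \tfrac{1}{\tau_P}\mu + \tfrac{k_P}{\tau_P}\lambda_n^B = 0$, which by the quadratic formula yields the two roots listed in the theorem. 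The case $n=1$ specializes to $\{0, -1/\tau_P\}$, accounting for the two isolated eigenvalues in the set $\Lambda(A)$.

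For the stability claim I would use that connectedness of $\mathcal{G}$ implies $\lambda_n^B > 0$ for all $n \geq 2$. For the voltage block this immediately makes each eigenvalue strictly negative once $c_Q > 0$, which is exactly the stated condition $\bar b > -1/(2k_Q)$ rewritten via $c_Q = 1 + 2k_Q\bar b$. For each $2\times 2$ block $M_n$ with $n\geq 2$, both coefficients of the characteristic quadratic are positive (using $k_P,\tau_P>0$), so by Routh--Hurwitz both roots lie in the open left half-plane; alternatively this can be read off the explicit root formula by distinguishing the real and complex regimes. The only eigenvalue on the imaginary axis is therefore the single $\mu=0$ arising from $\lambda_1^B=0$ in $M_1$.

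The main obstacle is mostly bookkeeping: carefully justifying that the similarity transformation and permutation reduce $A_1$ to the $2\times 2$ blocks $M_n$, and verifying that the zero eigenvalue has algebraic multiplicity exactly one so that $A$ is stable on the rest of the spectrum. I do not expect a deep technical difficulty beyond these standard manipulations.
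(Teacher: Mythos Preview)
Your proposal is correct and follows essentially the same approach as the paper: the paper's proof simply states that the eigenvalues follow from $A$'s characteristic polynomial and that connectedness of $\mathcal{G}$ gives $0 = \lambda_1^B < \lambda_2^B \le \ldots \le \lambda_N^B$, from which the sign conditions on the real parts are immediate. Your block-diagonalization via the spectral decomposition of $L_B$ is exactly how one would carry out that computation in detail, so you have filled in the steps the paper left implicit.
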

\begin{proof}
The eigenvalues are given by $A$'s characteristic polynomial. Since the graph underlying $L_B$ is connected,  $L_B$ is Hermitian positive semi-definite and $\textstyle 0 = \textstyle \lambda_1^B <\textstyle \lambda_2^B \le \textstyle \ldots \le \textstyle \lambda_N^B$. It is then easy to see that if $\textstyle k_P,\tau_P,k_Q,\tau_Q,c_Q >0$, all eigenvalues have negative real parts. 
\end{proof}
Using this result we conclude that the system \eqref{eq:iosystem1} is input-output stable and that its \hn norm is bounded. 

\subsection{\hn norm calculation}
To derive the \hn norm of \eqref{eq:iosystem1}, we follow the approach in \cite{BamiehGayme2012} and use the following unitary state transformation:
\[\begin{bmatrix}
\delta \\ \omega \\ V
\end{bmatrix} =: \begin{bmatrix}
U & 0 & 0\\ 0& U & 0\\ 0& 0& U
\end{bmatrix}\begin{bmatrix}
\hat{\delta} \\ \hat{\omega} \\ \hat{V}
\end{bmatrix},
\]
where $U$ is the unitary matrix which diagonalizes $L_B$, i.e., $L_B = U^*\Lambda_BU$ with $\Lambda_B = \mathrm{diag}\{\lambda_1^B, \lambda_2^B, \ldots, \lambda_N^B\}$. Given that the \hn norm is unitarily invariant, we can also apply transformations to the input and the output, such that $\hat{y} = \begin{bmatrix}
U^* & 0 \\ 0 & U^* 
\end{bmatrix}y$ and $\hat{\mathrm{w}} = \begin{bmatrix}
U^* & 0 \\ 0 & U^* 
\end{bmatrix}\mathrm{w}$. Since we have assumed $L_G = \alpha L_B$, we have that $L_G^{1/2}$ and $L_B$ are simultaneously diagonalizable. Therefore, $U^*L_G^{1/2}U = \Lambda_G^{1/2} = \sqrt{\alpha} \Lambda_B^{1/2}$.

Through these transformations, we obtain a system $\hat{H}$ in which all blocks of the system \eqref{eq:iosystem1} have been diagonalized. This system thus represents $N$ decoupled subsystems $\hat{H}_n$: 

\begin{align}
\nonumber
\begin{bmatrix}
\dot{\hat{\delta}}_n \\ \dot{\hat{\omega}}_n \\ \dot{\hat{V}}_n
\end{bmatrix}  =& \begin{bmatrix}
0 & 1 & 0 \\ -\frac{k_P}{\tau_P} \lambda^B_n & -\frac{1}{\tau_P} & 0\\0& 0 & -\frac{c_Q}{\tau_Q}  -\frac{k_Q}{\tau_Q} \lambda^B_n
\end{bmatrix} \begin{bmatrix}
\hat{\delta}_n \\ \hat{\omega}_n \\ \hat{V}_n
\end{bmatrix}   \\ \label{eq:decoupledsystem}
&+ \begin{bmatrix}
0 & 0 \\ \frac{1}{\tau_P} & 0 \\ 0 & \frac{1}{\tau_Q}
\end{bmatrix} \hat{\mathrm{w}}_n =: A_n \hat{\Psi}_n + B_n \hat{\mathrm{w}}_n, \\  \nonumber
\hat{y}_n =& \sqrt{\alpha \lambda_n^B} \begin{bmatrix}
1 & 0 & 0 \\ 0 & 0 & 1 
\end{bmatrix} \begin{bmatrix}
\hat{\delta}_n \\ \hat{\omega}_n \\ \hat{V}_n
\end{bmatrix} =: C_n \hat{\Psi}_n, 
\end{align}
and the \hn norm of the system $\hat{H}$ will be $||\hat{H}||_2^2 = \textstyle \sum_{n=1}^N ||\hat{H}_n||_2^2 = ||H||_2^2$. Notice that the subsystem $\hat{H}_1$ corresponding to $\lambda_1 = 0$ has the the output $\hat{y}_1 = 0\cdot\hat{\psi}$. It is therefore unobservable and has $||\hat{H}_1||_2^2 = 0$. 

The remaining subsystems' \hn norms are obtained by calculating the observability Gramian $X_n \in \mathbb{C}^{3\times 3}$ from the Lyapunov equation \begin{equation}
\label{eq:lyapunov}
A_n^*X_n + X_nA_n = -C_n^*C_n.\end{equation}  We then have that $ \textstyle ||\hat{H}_n||_2^2 = \textstyle \mathrm{tr}\{ B_n^*X_n^{\phantom{1}} B_n^{\phantom{1}}\} =\textstyle \frac{1}{\tau_P^2} X_{n_{22}} + \textstyle \frac{1}{\tau_Q^2} X_{n_{33}}$. Due to space limitations we omit the expansion of \eqref{eq:lyapunov}, but note its solution for $X_{n_{22}}$ and $X_{n_{33}}$: \[X_{n_{22}} = \frac{\alpha \tau_P^2}{2 k_P},~ ~X_{n_{33}} = \frac{\alpha \tau_Q}{2}\cdot \frac{1}{\frac{c_Q}{\lambda_n^B} + k_Q}.\]
Finally, summing up the $N-1$ non-zero subsystem norms leads to our main result:
\begin{theorem}
\label{thm:mainresult}
The squared \hn norm of the input-output mapping \eqref{eq:iosystem1} is given by:
\begin{equation}
\label{eq:plossnorm} \boxed{
||H||_2^2 = \frac{\alpha}{2 k_P}(N-1) + \frac{\alpha }{2 \tau_Q}\sum_{n=2}^N \frac{1}{\frac{c_Q}{\lambda_n^B} + k_Q}.}
\end{equation}
According to the discussion in Section~\ref{sec:perfmeas}, this expression represents the expected power losses due to a white noise disturbance input $\mathrm{w}$. 
\end{theorem}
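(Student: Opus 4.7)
My plan is to exploit the diagonalization already developed above, which reduces the problem to summing $\mathcal{H}_2$ norms of low-order scalar-input subsystems. By unitary invariance of the $\mathcal{H}_2$ norm, $\|H\|_2^2 = \|\hat{H}\|_2^2 = \sum_{n=1}^N \|\hat{H}_n\|_2^2$. The $n=1$ term vanishes because $C_1 = 0$ when $\lambda_1^B = 0$, so it suffices to compute $\|\hat{H}_n\|_2^2$ for $n = 2,\dots,N$ and add the results.

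The key structural observation is that $A_n$ in \eqref{eq:decoupledsystem} is block-diagonal: a $2 \times 2$ block governs the $(\hat{\delta}_n, \hat{\omega}_n)$ dynamics and a scalar block governs $\hat{V}_n$. Since $B_n$ and $C_n$ respect the same block structure, the observability Gramian $X_n$ solving the Lyapunov equation \eqref{eq:lyapunov} is also block-diagonal, and $\|\hat{H}_n\|_2^2$ decomposes cleanly into a phase/frequency contribution $X_{n_{22}}/\tau_P^2$ and a voltage contribution $X_{n_{33}}/\tau_Q^2$, which I would handle separately.

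The voltage subsystem is scalar, with dynamics $\dot{\hat{V}}_n = -\tfrac{c_Q + k_Q\lambda_n^B}{\tau_Q}\hat{V}_n + \tfrac{1}{\tau_Q}\hat{\mathrm{w}}_n^V$ and output $\sqrt{\alpha\lambda_n^B}\,\hat{V}_n$. The scalar Lyapunov equation reduces to $2\tfrac{c_Q + k_Q\lambda_n^B}{\tau_Q} X_{n_{33}} = \alpha \lambda_n^B$, yielding the stated $X_{n_{33}}$ and hence the per-mode summand $\tfrac{\alpha}{2\tau_Q(c_Q/\lambda_n^B + k_Q)}$. For the $2 \times 2$ phase/frequency block I would either expand the Lyapunov equation directly or pass to the SISO transfer function from $\hat{\mathrm{w}}_n^\omega$ to the observed output component, which has the second-order form $\tfrac{\sqrt{\alpha\lambda_n^B}/\tau_P}{s^2 + s/\tau_P + k_P\lambda_n^B/\tau_P}$. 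Applying the textbook identity $\bigl\|\tfrac{b_0}{s^2 + a_1 s + a_0}\bigr\|_2^2 = \tfrac{b_0^2}{2 a_0 a_1}$ produces $\tfrac{\alpha}{2k_P}$, \emph{independent} of $\lambda_n^B$.

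Summing over $n = 2,\dots,N$ then assembles the two terms of \eqref{eq:plossnorm}: the phase/frequency contributions collapse to the constant $\tfrac{\alpha(N-1)}{2k_P}$, while the voltage contributions retain the eigenvalue-dependent sum. The only mildly technical step is the $2 \times 2$ Lyapunov solve, which the authors have already summarized; the main conceptual point is that the block structure of $A_n$ is precisely what separates topology-blind frequency-control losses from the eigenvalue-dependent voltage-control losses, the dichotomy that drives the discussion in the remainder of the paper.
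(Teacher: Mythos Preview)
Your proposal is correct and follows the same route as the paper: diagonalize via the unitary transformation, discard the $n=1$ mode as unobservable, solve the per-mode Lyapunov equation \eqref{eq:lyapunov} for $X_{n_{22}}$ and $X_{n_{33}}$, and sum. Your explicit use of the block-diagonal structure of $A_n$ and the optional transfer-function shortcut $\bigl\|b_0/(s^2+a_1 s+a_0)\bigr\|_2^2 = b_0^2/(2a_0a_1)$ for the $2\times2$ block are minor computational conveniences rather than a different argument; the paper simply states the Gramian entries without displaying those intermediate steps.
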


Under the present assumptions, the inverter's frequency and voltage control dynamics are decoupled. Due to the decoupled output measurement, the \hn norm in \eqref{eq:plossnorm} can be shown to be the sum of the respective norms of two decoupled subsystems: $||H||_2^2 = ||H^\delta||_2^2 + ||H^V||_2^2$.

The power losses associated with the phase angle synchronization and active power sharing are then  \begin{equation}
\label{eq:phaselosses}
 ||H^\delta||_2^2 = \frac{\alpha}{2 k_P}(N-1).
\end{equation}
This is the same result as obtained for systems of synchronous generators in \cite{BamiehGayme2012}, where the droop coefficient $k_P$ is analogous to the generator damping. These losses scale linearly with the number of nodes $N$, in the same way as the microscopic error measures evaluated in \cite{Bamieh2012} for the vehicular formation problem. It is also worth noting that this quantity is entirely independent of network topology, i.e., a loosely connected network will incur the same transient losses during phase synchronization as a highly connected one. While this conclusion only holds under the present assumptions of uniform generator parameters and equal resistance-to-reactance ratios $\alpha$ across the network, it has been demonstrated that the topology dependence remains weak when those assumptions are relaxed \cite{Tegling2014}. We also remark that Siami and Motee in \cite{SiamiMotee2013} propose a network-weighted average $\bar{\alpha}$ which provides a generalization of \eqref{eq:phaselosses}. 

The losses associated with the voltage control and reactive power sharing are given by 
\begin{equation}
\label{eq:voltagelosses}
||H^V||_2^2 = \frac{\alpha }{2 \tau_Q}  \sum_{n=2}^N \frac{1}{\frac{c_Q}{\lambda_n^B} + k_Q},
\end{equation}
and depend on the topology of the network through the eigenvalues $\lambda_n^B$ of $L_B$. The losses increase when the eigenvalues $\lambda_n^B$ are larger, which implies that they increase with increasing line susceptances and network connectivity. These losses can be said to be inversely related to what we may call the network's \emph{total effective reactance} as studied in \cite{Grunberg2015}, but we defer further discussion of this notion to future work. 

\subsection{Specific network topologies}
\label{sec:topologies}
The result of Theorem \ref{thm:mainresult} indicates that transient losses increase with increasing network connectivity. While microgrid network structures may vary, in terms of connectivity they all fall somewhere between the two extremes given by the complete graph and the line graph. We next present results for these two special cases.

\begin{theorem}
\label{thm:complete}
If the graph underlying the network $\mathcal{G}$ is \textit{complete}, i.e., there is a line $\mathcal{E}_{ik}$ connecting each node pair $i,k\in \mathcal{N}$, then the expected power losses are bounded from above by:
\begin{equation}
\label{eq:completebound}
 ||H||_2^2 \le \frac{\alpha}{2} (N-1) \left( \frac{1}{k_P} + \frac{1}{\tau_Q \left(\frac{c_Q}{N\underline{b}} +k_Q \right)} \right), 
\end{equation}
where $\underline{b}$ is the arithmetic mean of the susceptances $b_{ik}$ for all network lines $\mathcal{E}_{ik} \in \mathcal{E}$. 

The losses are bounded from below by:
\begin{equation}
\label{eq:completeboundlower}
||H||_2^2  \ge \frac{\alpha}{2} (N-1) \left( \frac{1}{k_P} + \frac{1}{\tau_Q \left(\frac{c_Q}{N b_{\min}} +k_Q \right)} \right), 
\end{equation}
where $b_{\min} = \min_{\mathcal{E}}b_{ik}$. If $b_{ik} = \underline{b} = b_{\min}$ for all $\mathcal{E}_{ik} \in \mathcal{E}$, \eqref{eq:completebound} -- \eqref{eq:completeboundlower} turn into equalities.
 \end{theorem}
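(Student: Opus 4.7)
The plan is to start from the closed-form expression in Theorem~\ref{thm:mainresult} and bound the sum $\sum_{n=2}^N (c_Q/\lambda_n^B + k_Q)^{-1}$ from above and below using spectral properties of the complete-graph Laplacian $L_B$. The first term $\frac{\alpha}{2k_P}(N-1)$ in \eqref{eq:plossnorm} is common to both bounds, so the entire argument reduces to controlling this second sum.

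First I would isolate the scalar function
\[ f(\lambda) := \frac{1}{\frac{c_Q}{\lambda} + k_Q} = \frac{\lambda}{c_Q + k_Q \lambda}, \qquad \lambda > 0, \]
and record two elementary properties by direct differentiation: $f'(\lambda) = c_Q/(c_Q + k_Q\lambda)^2 > 0$, so $f$ is strictly increasing, and $f''(\lambda) = -2c_Q k_Q/(c_Q+k_Q\lambda)^3 < 0$, so $f$ is strictly concave. These are the only analytic ingredients needed.

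For the upper bound I would compute the mean of the non-zero eigenvalues through the trace. Since $L_B$ is a Laplacian, $\sum_{n=1}^N \lambda_n^B = \mathrm{tr}(L_B) = \sum_i \sum_{k\sim i} b_{ik}$, and for the complete graph every node is adjacent to all $N-1$ others, so $\mathrm{tr}(L_B) = 2\sum_{\mathcal{E}_{ik}\in\mathcal{E}} b_{ik} = N(N-1)\underline{b}$. Hence $\frac{1}{N-1}\sum_{n=2}^N \lambda_n^B = N\underline{b}$, and Jensen's inequality applied to the concave $f$ gives
\[ \sum_{n=2}^N f(\lambda_n^B) \;\le\; (N-1)\, f(N\underline{b}) \;=\; \frac{N-1}{\frac{c_Q}{N\underline{b}}+k_Q}. \]
Substituting into \eqref{eq:plossnorm} yields \eqref{eq:completebound}.

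For the lower bound I would introduce $L_B^{\min}$, the Laplacian of the same complete graph with every line susceptance replaced by $b_{\min}$. Its non-zero eigenvalues are all equal to $Nb_{\min}$. The key observation is that the difference $L_B - L_B^{\min}$ is itself a graph Laplacian with non-negative edge weights $b_{ik}-b_{\min} \ge 0$, hence positive semidefinite. Weyl's inequality (or equivalently the Courant-Fischer min-max characterization) then gives $\lambda_n^B \ge \lambda_n^{B^{\min}} = Nb_{\min}$ for $n=2,\ldots,N$, and monotonicity of $f$ produces $f(\lambda_n^B) \ge f(Nb_{\min})$ term by term. Summing over $n$ and substituting into \eqref{eq:plossnorm} yields \eqref{eq:completeboundlower}. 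The equality case is immediate: if $b_{ik}=\underline{b}=b_{\min}$ for every edge then $L_B = L_B^{\min}$, all non-zero eigenvalues coincide with $N\underline{b}$, and both Jensen's and Weyl's inequalities collapse to equalities.

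The argument is short, and its only real subtlety is the spectral comparison in step three: one has to notice that $L_B - L_B^{\min}$ is again a Laplacian, which is what legitimizes eigenvalue-wise monotonicity. Everything else is a straightforward combination of the trace identity for complete graphs with the concavity/monotonicity of the single-variable function $f$.
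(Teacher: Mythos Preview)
Your argument is correct and follows essentially the same route as the paper: Jensen's inequality on the concave map $\lambda\mapsto (c_Q/\lambda+k_Q)^{-1}$ together with the trace identity $\mathrm{tr}(L_B)=N(N-1)\underline{b}$ for the upper bound, and monotonicity of the same map combined with the eigenvalue inequality $\lambda_n^B\ge Nb_{\min}$ for the lower bound. The only technical difference is in how that eigenvalue inequality is justified: the paper exploits that any two complete-graph Laplacians are simultaneously diagonalizable, giving the exact decomposition $\lambda_n^B=Nb_{\min}+\lambda_n^{\Delta B}$, whereas you invoke Weyl's inequality on the positive semidefinite perturbation $L_B-L_B^{\min}$; your route is slightly more general (it does not rely on the complete-graph structure for the comparison step) and avoids citing an external lemma, while the paper's version yields a sharper identity but needs the commuting-Laplacians fact.
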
 
\begin{proof}
See Appendix.
\end{proof}
\begin{corollary}
\label{cor:complete}
If the graph underlying the network $\mathcal{G}$ is complete, then for large $N$
\begin{equation}
\label{eq:completelimit}
||H||_2^2\approx \frac{\alpha}{2}(N-1)\left( \frac{1}{k_P} + \frac{1}{\tau_Qk_Q} \right).  \end{equation}
\end{corollary}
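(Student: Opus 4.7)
The plan is to derive the asymptotic expression \eqref{eq:completelimit} by a squeeze argument directly from the bounds provided by Theorem \ref{thm:complete}. Since the corollary concerns the behavior of $||H||_2^2$ for large $N$, I would rely on the upper bound \eqref{eq:completebound} and the lower bound \eqref{eq:completeboundlower}, which already isolate the leading $(N-1)$-scaling and differ only through the terms $\frac{c_Q}{N\underline{b}}$ and $\frac{c_Q}{N b_{\min}}$ inside the voltage-loss denominator.

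First, I would observe that for any sequence of complete graphs in which the line susceptances $b_{ik}$ are bounded away from zero uniformly in $N$, both $\underline{b}$ and $b_{\min}$ stay bounded below by a positive constant. Consequently
\[
\frac{c_Q}{N\underline{b}} \;\longrightarrow\; 0 \quad \text{and}\quad \frac{c_Q}{N b_{\min}} \;\longrightarrow\; 0
\]
as $N\to\infty$. Plugging these limits into the denominator $\tau_Q\bigl(\tfrac{c_Q}{N\underline{b}} + k_Q\bigr)$ in the upper bound \eqref{eq:completebound}, and into $\tau_Q\bigl(\tfrac{c_Q}{N b_{\min}} + k_Q\bigr)$ in the lower bound \eqref{eq:completeboundlower}, both quantities tend to $\tau_Q k_Q$. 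Hence the two bounds squeeze $||H||_2^2$ between expressions which share the common leading order
\[
\frac{\alpha}{2}(N-1)\left(\frac{1}{k_P} + \frac{1}{\tau_Q k_Q}\right),
\]
proving the claimed approximation in the sense that the ratio of $||H||_2^2$ to the right-hand side of \eqref{eq:completelimit} converges to $1$.

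I do not anticipate any hard technical obstacle here, since the corollary is essentially a direct asymptotic consequence of Theorem \ref{thm:complete}. The only real care required is a clean statement of what ``$\approx$'' is meant to capture: it should be read as asymptotic equivalence of the dominant term in $N$, valid under the mild standing assumption that the susceptances $b_{ik}$ do not degenerate to zero as the network grows. If one wanted to be fully quantitative, one could also retain the next-order correction by Taylor expanding $\tfrac{1}{\tfrac{c_Q}{N b} + k_Q} = \tfrac{1}{k_Q} - \tfrac{c_Q}{k_Q^2 N b} + \mathcal{O}(N^{-2})$ inside both bounds, which would show that the error in \eqref{eq:completelimit} is of order $\mathcal{O}(1)$ relative to the $\mathcal{O}(N)$ leading term, but this extra precision is not needed to establish the corollary as stated.
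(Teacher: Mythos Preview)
Your proposal is correct and follows essentially the same approach as the paper: the paper's proof simply notes that for large $N$ both $\tfrac{c_Q}{N\underline{b}}\to 0$ and $\tfrac{c_Q}{N b_{\min}}\to 0$, so the upper and lower bounds of Theorem~\ref{thm:complete} coincide and yield \eqref{eq:completelimit}. Your write-up is more explicit about the squeeze argument and the meaning of ``$\approx$'', but the underlying idea is identical.
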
 \begin{proof}
For large $N$, $\textstyle \frac{c_Q}{N\underline{b}} \rightarrow 0$ and $\textstyle \frac{c_Q}{Nb_{\min}} \rightarrow 0$ and the result follows. 
\end{proof}
By Corollary~\ref{cor:complete}, the losses in a large fully connected network will depend on the droop settings for active and reactive power respectively, where higher droop gains give smaller losses. We also notice that the losses associated with the voltage control decrease with increasing $\tau_Q$. In the limit where $\tau_Q \rightarrow \infty$, the voltages are constant, and we retrieve the result from \cite{BamiehGayme2012}, in which a constant voltage profile was an underlying modeling assumption. In any case, the losses will grow unboundedly with the network size $N$. 
\begin{figure}
\centering
\includegraphics[width = 0.45\textwidth]{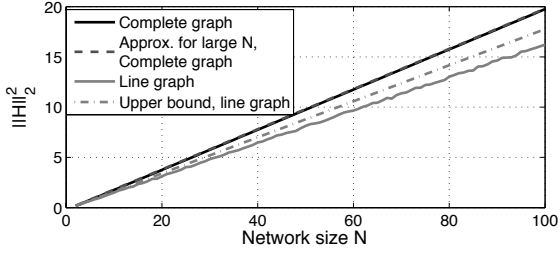}
\caption{Values of the \hn norm in \eqref{eq:plossnorm} for sample networks with line graph and complete graph structure, along with the approximation \eqref{eq:completelimit}, whose line coincides with the the complete graph line, and the bound \eqref{eq:boundradial}. Here, $k_P = k_Q = c_Q = 1$, $\alpha = 0.2$ and the line susceptances are uniformly distributed on the interval $(0.5,3.25)$.  }
\label{fig:scaling}
\end{figure}

In this paper, we consider a Kron reduced network model. Such reductions of power networks in general result in fully connected effective networks \cite{Motter2013}, and the expressions \eqref{eq:completebound} -- \eqref{eq:completelimit} hold. However, future microgrids may arise through the addition of generation units at some or all nodes in distribution grids. Distribution grids typically have a radial network structure, i.e., have a line graph as their underlying topology, and would maintain line graph structure also in the Kron-reduced case. The following theorem describes the transient losses in this case:
\begin{theorem}
\label{thm:line}
If the graph underlying the network $\mathcal{G}$ is a \textit{line graph}, i.e., $\mathcal{E}  = \{ \mathcal{E}_{i,i-1}, \mathcal{E}_{i,i+1} \}$ for $i = 2,\ldots, N-1$ and $\underline{b}$ is the arithmetic mean of the associated line susceptances, then the power losses are bounded by:
\begin{equation}
\label{eq:boundradial}
||H||_2^2  \le \frac{\alpha}{2} (N-1) \left( \frac{1}{k_P}  + \frac{1}{\tau_Q(\frac{c_Q}{2\underline{b}}+k_Q) }\right).  \end{equation} 
\end{theorem}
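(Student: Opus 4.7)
The plan is to reduce the statement to a one-variable inequality and apply Jensen's inequality to a concave function, exactly in parallel with how the complete-graph bound \eqref{eq:completebound} is presumably proved. By Theorem~\ref{thm:mainresult} we have the exact expression
\[
\|H\|_2^2 = \frac{\alpha}{2k_P}(N-1) + \frac{\alpha}{2\tau_Q}\sum_{n=2}^N \frac{1}{\frac{c_Q}{\lambda_n^B}+k_Q}.
\]
The first term already matches the first term of \eqref{eq:boundradial}, so it suffices to bound the voltage-loss sum by $(N-1)/(c_Q/(2\underline{b})+k_Q)$.

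First I would rewrite the summand as $f(\lambda_n^B)$ with $f(\lambda) := \lambda/(c_Q+k_Q\lambda)$, and verify by elementary differentiation that $f$ is strictly increasing and strictly concave on $(0,\infty)$ whenever $c_Q,k_Q>0$ (the second derivative is $-2c_Q k_Q(c_Q+k_Q\lambda)^{-3}<0$). This concavity is the single analytical fact the argument rests on; once it is in hand, Jensen's inequality applied to the $N-1$ values $\lambda_2^B,\dots,\lambda_N^B$ yields
\[
\frac{1}{N-1}\sum_{n=2}^N f(\lambda_n^B) \;\le\; f\!\left(\frac{1}{N-1}\sum_{n=2}^N \lambda_n^B\right).
\]

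Second, I would evaluate the mean of the non-trivial eigenvalues using $\mathrm{tr}(L_B)=\sum_{n=1}^N \lambda_n^B = 2\sum_{\mathcal{E}_{ik}\in\mathcal{E}}b_{ik}$, which holds for any weighted graph Laplacian. Since $\lambda_1^B=0$ and a line graph on $N$ nodes has exactly $N-1$ edges, this gives $\sum_{n=2}^N \lambda_n^B = 2(N-1)\underline{b}$, hence the argument on the right of the Jensen step is precisely $2\underline{b}$. Substituting $f(2\underline{b})=1/(c_Q/(2\underline{b})+k_Q)$ and multiplying by $\alpha/(2\tau_Q)$ delivers \eqref{eq:boundradial}.

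The main obstacle is essentially just verifying concavity of $f$; the remainder is bookkeeping with the Laplacian trace. Note that the same template, with $|\mathcal{E}|=N(N-1)/2$ in place of $N-1$, produces the arithmetic mean $N\underline{b}$ and recovers the complete-graph upper bound \eqref{eq:completebound}, confirming that the line graph and complete graph bounds in this paper are two instances of one concavity argument differing only in the edge count.
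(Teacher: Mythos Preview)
Your proposal is correct and is essentially identical to the paper's proof: the paper also applies Jensen's inequality to the concave function $\phi(x)=1/(1/x+k)$ (your $f$ up to a rescaling of the argument) and then evaluates the mean of the non-trivial Laplacian eigenvalues via $\mathrm{tr}(L_B)=2\sum_{\mathcal{E}}b_{ik}$, using that a line graph has $N-1$ edges to obtain $2\underline{b}$. Your closing remark that the complete-graph and line-graph bounds are two instances of the same concavity argument differing only in the edge count is exactly how the paper organizes the two proofs.
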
 
\begin{proof}
See Appendix.
\end{proof}
We notice that even for large $N$, these losses will depend on the actual value of the average line susceptance $\underline{b}$, in contrast to the result for complete graphs in Corollary~\ref{cor:complete}. The underlying scaling of the losses with the network size $N$, however, remains. In Fig.~\ref{fig:scaling} the values of the \hn norm as a function of network size $N$ are displayed for the two network topologies discussed in this section.

The fact that a highly interconnected network incurs larger power losses in recovering or maintaining synchrony than a loosely connected network stands in contrast to typical notions of power system stability. For example, it has been shown that highly interconnected networks are easier to synchronize \cite{Dorfler2010, Schiffer2014} and have a higher rate of convergence \cite{Tang2011}. Fig.~\ref{fig:simulation} shows the transient behaviors obtained from simulations of a 5 node network with respective complete and line graph topologies. The plot clearly shows a faster convergence in the complete graph case. This faster convergence, however, comes at a greater cost in terms of power losses. An intuition behind this result may be to consider the non-equilibrium power flows as additional controllers in the network, so that the associated losses are a measure of their control effort. Then, more links imply that the voltage drop which occurs between each node pair leads to a larger overall number of power flows, i.e., more control, and thus more losses. 

\begin{figure*}
\centering
\includegraphics[width = 1\textwidth]{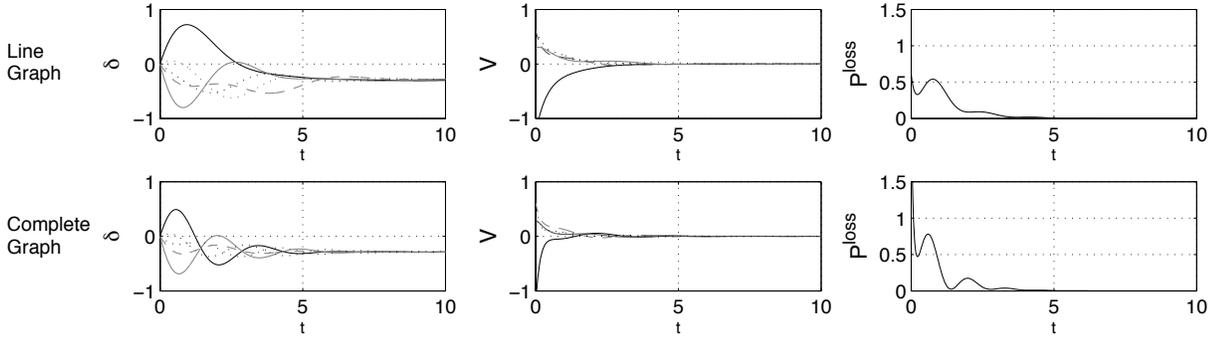}
\caption{Simulations of the system \eqref{eq:iosystem1} with $N=5$ inverters.}
\label{fig:simulation}
\end{figure*}

\section{Performance with cross-coupled voltage and frequency dynamics}
\label{sec:lossygrids}
We will now relax the assumption of decoupled microgrid dynamics and again study the system \eqref{eq:sseqns}. Using assumptions (i) -- (iii) of Section~\ref{sec:lossless} we can formulate the MIMO system $H^\alpha$:
\begin{subequations}
\label{eq:iosystem2}
\begin{align}
\begin{bmatrix} \nonumber
\dot{\delta} \\ \dot{\omega} \\ \dot{V}
\end{bmatrix}  =& \begin{bmatrix}
0 & I & 0 \\ -\frac{k_P}{\tau_P} L_B & -\frac{1}{\tau_P} I & \frac{k_P}{\tau_P}\alpha L_B\\ -\frac{k_Q}{\tau_Q}\alpha L_B & 0 & -\frac{c_Q}{\tau_Q} I -\frac{k_Q}{\tau_Q} L_B
\end{bmatrix} \begin{bmatrix}
\delta \\ \omega \\ V
\end{bmatrix}   \\ \label{eq:lossystates}
&+ \begin{bmatrix}
0 & 0 \\ \frac{1}{\tau_P}I & 0 \\ 0 & \frac{1}{\tau_Q}I
\end{bmatrix} \mathrm{w}  \\  \label{eq:lossyoutput}
y =& \begin{bmatrix}
\sqrt{\alpha} L_B^{1/2} & 0 & 0 \\ 0 & 0 & \sqrt{\alpha} L_B^{1/2}
\end{bmatrix} \begin{bmatrix}
\delta \\ \omega \\ V
\end{bmatrix}.
\end{align}
\end{subequations}
Compared to the lossless dynamics in \eqref{eq:losslessstates}, the system matrix in \eqref{eq:lossystates} has cross-couplings between the voltage and frequency dynamics which are proportional to the resistance-to-reactance ratio $\alpha$. We will examine the effect of these cross-couplings on the system's performance in terms of the cross-coupling strength $\alpha$. In particular, we are interested in characterizing the error obtained through the assumption of lossless microgrid dynamics from Section~\ref{sec:lossless}. 
\begin{figure}
\centering
\includegraphics[width = 0.45\textwidth]{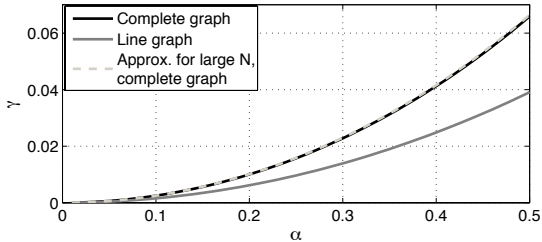}
\caption{Norm error $\gamma$ in \eqref{eq:defreldiff} as a function of $\alpha$ for networks of size $N=50$ with complete graph and line graph structure, along with the approximation \eqref{eq:gamma_complete}. Here, $x_{ij} = 0.2$, $k_P = 1$, $k_Q=2$, $\tau_P = \tau_Q = 0.5$ and $c_Q=1$.   }
\label{fig:reldiff}
\end{figure} 

Consider for this purpose the relative error in the squared \hn norm between the system $H^\alpha$ in \eqref{eq:iosystem2} and the  decoupled system $H$ in \eqref{eq:iosystem1}:
\begin{equation}
\label{eq:defreldiff}
\gamma = \frac{||H^\alpha||_2^2 - ||H||^2_2 }{||H||_2^2}.
\end{equation}
This quantity can be evaluated numerically and is shown in Figure~\ref{fig:reldiff} for $\alpha \in (0.01,0.5)$ for two sample networks of size $N=50$. We observe that the error is small and decreases faster than linearly as $\alpha \rightarrow 0$. These observations are accounted for by the following proposition:

\begin{proposition}
\label{prop:fullnorm}
The squared \hn norm of the system $H^\alpha$ in \eqref{eq:iosystem2} is, for sufficiently small $\alpha$, given by:
 \begin{equation}
\label{eq:proposednorm}
||H^\alpha||_2^2 = c_1(\Lambda_B) \alpha + c_2(\Lambda_B)\alpha^3 + c_3(\Lambda_B)(\alpha^5) + \ldots,
\end{equation}
where $c_k(\Lambda_B)$, $k = 1,2,\ldots$, are scalar functions of the eigenvalues of $L_B$. For the first term it holds that
\[c_1(\Lambda_B)\alpha = ||H||_2^2, \]
where $||H||_2^2$ was given in Theorem~\ref{thm:mainresult}. Hence,
\[\gamma =\frac{c_2(\Lambda_B) }{c_1(\Lambda_B) } \alpha^2 + \frac{c_3(\Lambda_B) }{c_1(\Lambda_B) } \alpha^4 +\ldots . \]
\end{proposition}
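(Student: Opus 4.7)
The plan is to reduce \eqref{eq:iosystem2} to $N$ decoupled three-dimensional subsystems and then exploit a sign-flip symmetry to force the expansion in $\alpha$ to contain only odd powers.

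First I would apply the block unitary transformation $\mathrm{diag}(U,U,U)$ used in Section~\ref{sec:lossless}. Every nonzero block in \eqref{eq:lossystates}--\eqref{eq:lossyoutput} is a scalar multiple of $I$ or of $L_B$, so the whole system decouples into subsystems $\hat{H}_n^\alpha$ whose state matrix has the form $A_n^\alpha = A_n^{(0)} + \alpha A_n^{(1)}$, where $A_n^{(0)}$ is the decoupled matrix of \eqref{eq:decoupledsystem} and $A_n^{(1)}$ collects only the two cross-coupling entries, and whose input and output matrices $B_n$, $C_n$ coincide with those in \eqref{eq:decoupledsystem}. Factoring the output scaling out of the norm, $||\hat{H}_n^\alpha||_2^2 = \alpha\lambda_n^B f_n(\alpha)$, where $f_n(\alpha)$ is the squared norm of the subsystem with the $\alpha$-independent output $\tilde{C}_n$ obtained by removing the $\sqrt{\alpha\lambda_n^B}$ prefactor. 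Since $A_n^{(0)}$ is strictly Hurwitz by Theorem~\ref{thm:eigenvalues} and $A_n^\alpha$ is linear in $\alpha$, the controllability Gramian $P_n^\alpha$ solving $A_n^\alpha P_n^\alpha + P_n^\alpha(A_n^\alpha)^* = -B_n B_n^*$ exists and depends analytically on $\alpha$ in a neighborhood of the origin, so $f_n(\alpha) = \mathrm{tr}(\tilde{C}_n P_n^\alpha \tilde{C}_n^*)$ admits a convergent Taylor series there.

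The central step is to show that $f_n$ is an \emph{even} function of $\alpha$. I would introduce the involution $D = \mathrm{diag}(1,1,-1)$ and verify the two identities $D A_n^{(0)} D = A_n^{(0)}$ and $D A_n^{(1)} D = -A_n^{(1)}$, which combine to give $DA_n^\alpha D = A_n^{-\alpha}$. Both $B_n B_n^*$ and $\tilde{C}_n^*\tilde{C}_n$ are diagonal matrices whose nonzero entries sit in positions preserved by $D$, so they commute with $D$. Conjugating the Lyapunov equation for $P_n^\alpha$ by $D$ and invoking uniqueness then yields $D P_n^\alpha D = P_n^{-\alpha}$, and substituting into the trace formula gives $f_n(\alpha) = f_n(-\alpha)$. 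Only even powers of $\alpha$ therefore appear in the Taylor series of $f_n$, and multiplying by the prefactor $\alpha\lambda_n^B$ and summing over $n = 2,\ldots,N$ (the $n=1$ subsystem being unobservable because $\lambda_1^B = 0$) produces exactly the expansion \eqref{eq:proposednorm} in odd powers of $\alpha$, with coefficients $c_k(\Lambda_B)$ that are scalar functions of the eigenvalues $\{\lambda_n^B\}$.

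Finally, the leading coefficient is identified by setting $\alpha = 0$ inside $A_n^\alpha$: the matrix collapses to $A_n^{(0)}$, so $\alpha \lambda_n^B f_n(0)$ reproduces the subsystem norm $||\hat{H}_n||_2^2$ computed in the proof of Theorem~\ref{thm:mainresult}, and summation gives $c_1(\Lambda_B)\alpha = \sum_{n=2}^N ||\hat{H}_n||_2^2 = ||H||_2^2$. The main obstacle I anticipate is the careful bookkeeping of the sign identities $DA_n^{(0)}D = A_n^{(0)}$ and $DA_n^{(1)}D = -A_n^{(1)}$ together with the invariance of $B_nB_n^*$ and $\tilde{C}_n^*\tilde{C}_n$ under $D$-conjugation for the particular block structure of \eqref{eq:lossystates}; once these bookkeeping identities are in place the rest of the argument is a mechanical consequence of the symmetry and analyticity of $P_n^\alpha$.
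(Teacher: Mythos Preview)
Your argument is correct, and in fact cleaner than what the paper does. The paper's proof consists only of the sentence ``The result is obtained in a manner analogous to the derivation in Section~\ref{sec:lossless},'' meaning one block-diagonalizes with $\mathrm{diag}(U,U,U)$, writes down the $3\times 3$ Lyapunov equation for each $\hat{H}_n^\alpha$, solves it explicitly as a rational function of $\alpha$, and then reads off by inspection that the trace $\frac{1}{\tau_P^2}X_{n_{22}}+\frac{1}{\tau_Q^2}X_{n_{33}}$ contains only odd powers of $\alpha$. You take the same diagonalization step but replace the explicit solution by the involution $D=\mathrm{diag}(1,1,-1)$, which satisfies $DA_n^{(0)}D=A_n^{(0)}$, $DA_n^{(1)}D=-A_n^{(1)}$ and commutes with both $B_nB_n^*$ and $\tilde{C}_n^*\tilde{C}_n$; this gives $DP_n^\alpha D=P_n^{-\alpha}$ and hence $f_n(\alpha)=f_n(-\alpha)$ without any computation. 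The payoff is a structural explanation of \emph{why} only odd powers of $\alpha$ appear in \eqref{eq:proposednorm} --- it is a direct consequence of the reflection symmetry $V\mapsto -V$ of the coupled dynamics --- whereas the paper's route delivers the same conclusion as an algebraic observation after a six-unknown linear solve. Both approaches identify $c_1(\Lambda_B)\alpha$ with $||H||_2^2$ by setting the coupling to zero and invoking Theorem~\ref{thm:mainresult}.
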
 \vspace{2mm}
\begin{proof}
The result is obtained in a manner analogous to the derivation in Section~\ref{sec:lossless}. Due to space limitations the full details are omitted here. 
\end{proof}
Proposition~\ref{prop:fullnorm} shows that the results obtained by assuming a lossless microgrid with decoupled dynamics are robust in the sense that the error is proportional to higher order powers of the coupling strength $\alpha$, provided $\alpha$ is small enough to guarantee the boundedness of $||H^\alpha||_2^2$. 

Now, consider again the special case where the graph underlying the network $\mathcal{G}$ is complete. If the number of nodes is large, the coefficients in \eqref{eq:proposednorm} are given as
\[c_k(\Lambda_B) = (N-1)\frac{k_P+k_Q\tau_Q}{2k_Q^2} \left(\frac{k_P\tau_Q}{k_Q} \right)^{k-2}, \]
for $k = 2, 3, \ldots$, and for all $\alpha$ such that $\frac{k_P\tau_Q}{k_Q} \alpha^2 <1$. The coefficient $c_1(\Lambda_B)$ is given by Corollary~\ref{cor:complete}.  

The relative error $\gamma$ for the complete graph, then satisfies  \begin{equation} \label{eq:gamma_complete}
\gamma =  \frac{k_P\tau_Q}{k_Q}\alpha^2+ \left( \frac{k_P\tau_Q}{k_Q}\right)^2 \alpha^4 + \ldots.  \end{equation}
Numerical results indicate that \eqref{eq:gamma_complete} also provides an upper bound for the relative error \eqref{eq:defreldiff} for general network topologies, as seen in Fig.~\ref{fig:reldiff}. 

The result \eqref{eq:gamma_complete} shows that faster voltage control (large $k_Q$, small $\tau_Q$) will decrease the effect of the cross-couplings on the transient power losses. Since large droop coefficients also decrease the overall losses by Theorem~\ref{thm:mainresult}, one may wish to prioritize a large voltage droop setting $k_Q$ when tuning power inverters in low to medium voltage grids where resistances are non-negligible.

\section{Summary and conclusions} 
\label{sec:conclusions}
We have derived expressions for the performance in terms of transient power losses of a droop-controlled inverter network subject to persistent small disturbances. We model the system with variable voltage dynamics, thus providing more realistic limits on performance than previous studies \cite{BamiehGayme2012, Tegling2014, Sjodin2014}. In particular, we show that previous results give a lower bound on performance and that transient losses are strictly larger if voltages are allowed to fluctuate. Furthermore, and in sharp contrast to previous results, these additional losses depend strongly on network topology. In fact, they will be larger in a highly connected network than in a loosely connected one. Our results also provide insights on how to tune controller parameters to improve this type of performance.

\section{Acknowledgements}
We would like to thank the anonymous reviewers for their valuable comments. 

\section*{Appendix}
\subsection*{Proof of Theorem \ref{thm:complete}}
Consider the function $\phi(x) = \frac{1}{\frac{1}{x}+k}$, which is concave for $x>0$, $k \ge 0$ ($\phi '' (x) = \frac{-2k}{(1+kx)^3} <0$). We have that $\textstyle  \lambda_n^B/c_Q >0$ for $n = 2,\ldots, N$ and can therefore apply Jensen's inequality of the form $\sum_{i = 1}^n \phi(x_i) \le n\phi\left(\frac{1}{n}\sum_{i = 1}^nx_i \right) $ to \eqref{eq:plossnorm} to obtain:
\begin{equation}
\label{eq:h2bound}
||H||_2^2  \le \frac{\alpha}{2k_P}(N-1) +\frac{\alpha}{2\tau_Q}(N-1)\frac{1}{\frac{c_Q}{\frac{1}{N-1}\sum_{n = 2}^N \lambda_n^B }+k_Q}.
\end{equation}
Using the definition of $L_B$ in \eqref{eq:defY}, we derive the average of the $N-1$ non-zero eigenvalues of $L_B$ as 
\[  \frac{1}{N-1}\sum_{n = 2}^N \lambda_n^B  =\frac{\mathrm{tr}\{L_B\}}{N-1} = \frac{2 \sum_{\mathcal{E}}b_{ik} }{N-1} = N \underline{b},\]
where $\underline{b}$ is the arithmetic mean of the susceptances of the $N(N-1)/2$ edges in the complete graph. Substituting the above into \eqref{eq:h2bound} yields the result \eqref{eq:completebound}.

Given that $\phi(x)$ is monotonically increasing in $x$, the inequality \eqref{eq:completeboundlower} is derived by setting  $L_B = b_{\min}L + \Delta L_B$. Here, $L$ is an unweighted complete graph Laplacian, and  $\Delta L_B$ is a complete graph Laplacian with edge weights $b_{ik} - b_{\min}\ge 0$. Since $L$ and $\Delta L_B$ are simultaneously diagonalizable \cite[Lemma A.1]{Sjodin2013}, $\lambda_n^B = b_{\min} N + \lambda^{\Delta B}_n \ge b_{\min}N$. If $L_B = b_{\min}L$, $\Delta L_B = 0$ and \eqref{eq:completeboundlower} holds with equality. 

\subsection*{Proof of Theorem \ref{thm:line}}
The argument follows the proof of Theorem~\ref{thm:complete}. Here, the average of the $N-1$ non-zero eigenvalues in \eqref{eq:h2bound} is
\[   \frac{1}{N-1}\sum_{n = 2}^N \lambda_n^B   = \frac{\mathrm{tr}\{L_B\}}{N-1} = \frac{2 \sum_{\mathcal{E}}b_{ij}  }{N-1} = 2 \underline{b},\]
where $\underline{b}$ is the mean of the $(N-1)$ edge susceptances in the line graph.

\bibliographystyle{IEEETran}
\bibliography{EmmasBib14}

\end{document}